\newcommand{\DW}{{\Delta W}}
\newcommand{\intw}[2]{\int_0^{\ifx#1t h\else%
\ifx#1s t\else\ifx#1r s\else\ifx#1q r\else 
\fi \fi \fi \fi}\!#2\,dW_{#1}}
\newcommand{\intab}{\int_a^b}
\def\E{\operatorname{E}}
\def\Var{\operatorname{Var}}
\theoremstyle{definition}
\newtheorem{exercise}{Exercise}
\title{Modify the Improved Euler scheme to integrate  stochastic differential equations}
\author{A.~J. Roberts\thanks{School of Mathematical Sciences, University of Adelaide, South Australia. \url{maitlo:anthony.roberts@adelaide.edu.au}}}
\date{October 3, 2012}
\begin{document}

\maketitle

\begin{abstract}
A practical and new Runge--Kutta numerical scheme for stochastic differential equations is explored.  Numerical examples demonstrate the strong convergence of the method.  The first order strong convergence is then proved using It\^o integrals for both It\^o and Stratonovich interpretations.  As a straightforward modification of the deterministic Improved Euler/Heun method, the method is a good entry level scheme for stochastic differential equations, especially in conjunction with Higham's introduction [SIAM Review, 43:525--546, 2001].
\end{abstract}

\tableofcontents

\section{Introduce a modified integration}

Nearly twenty years ago Kloeden and Platen~\cite{Kloeden92} described schemes for numerically integrating stochastic differential equations (\sde{}s).  Intervening research led to recent developments of useful Runge--Kutta like methods for It\^o \sde{}s by Andreas Rossler~\cite{Rossler2010, Rossler2009}  and for Stratonovich \sde{}s by Yoshio Komori~\cite{Komori2007c, Komori2007b, Komori2007a}.
These numerical integration schemes for \sde{}s are quite complicated, and typically do not easily reduce to accurate deterministic schemes.
This short article introduces a Runge--Kutta scheme for \sde{}s that does straightforwardly reduce to a well known deterministic scheme---the variously called Improved Euler, Heun, or Runge--Kutta~2 scheme.

As well as being a novel practical scheme for the numerical integration of \sde{}s, because of the strong connection to a well known deterministic integration scheme, the scheme proposed here serves as an entry level scheme for teaching stochastic dynamics.  One could use this scheme together with Higham's~\cite{Higham01} introduction to the numerical simulation of \sde{}s.  Section~\ref{sec:edet} on the method applied to examples assumes a background knowledge of basic numerical methods for ordinary differential equations and deterministic calculus as typically taught in early years at university.  Section~\ref{sec:peg} on the underlying theory assumes knowledge of stochastic processes such as continuous time Markov Chains, and, although not essential, preferably at least a formal introduction to stochastic differential equations (such as the book~\cite{Roberts08g} or article~\cite{Higham01} with material that is successfully taught at second\slash third year university).

Consider the vector stochastic process~$\vec X(t)\in \mathbb R^n$ that satisfies the general It\^o \sde
\begin{equation}
d\vec X=\vec a(t,\vec X)\,dt+\vec b(t,\vec X)\,dW,
\label{eq:sde1ab}
\end{equation}
where drift~$\vec a$ and volatility~$\vec b$ are sufficiently smooth functions of their arguments.  The noise is represented by the differential~$dW$ which symbolically denotes infinitesimal increments of the random walk of a Wiener process~$W(t,\omega)$.  The symbolic form of the \sde~\eqref{eq:sde1ab} follows from the most basic approximation to an evolving system with noise that over a time step~$\Delta t_k$ the change in the dependent variable is
\begin{equation*}
\Delta\vec X_k\approx \vec a(t_k,\vec X_k)\Delta t_k
+\vec b(t_k,\vec X_k)\DW_k
\end{equation*}
where $\DW_k=W(t_{k+1},\omega)-W(t_k,\omega)$ symbolises some `random' effect.  This basic approximation is low accuracy and needs improving for practical applications, but it does form a basis for theory, and it introduces the noise process~$W(t,\omega)$, called a Wiener process. We use~$\omega$ to denote the realisation of the noise.  Such a Wiener process is defined by $W(0,\omega)=0$ and that the increment $W(t,\omega)-W(s,\omega)$ is distributed as a zero-mean, normal variable, with variance~$t-s$\,, and independent of earlier times.  Consequently, crudely put, $dW/dt$~then is a `white noise' with a flat power spectrum.  The \sde~\eqref{eq:sde1ab} may then be interpreted as a dynamical system affected by white noise. 

The proposed modified Runge--Kutta scheme for the general \sde~\eqref{eq:sde1ab} is the following. Given time step~$h$, and given the value $\vec X(t_k)=\vec X_k$\,, estimate $\vec X(t_{k+1})$ by~$\vec X_{k+1}$ for time $t_{k+1}=t_k+h$ via
\begin{align}
&\vec K_1=h\vec a(t_k,\vec X_k)+(\DW_k-S_k\sqrt h)\vec b(t_k,\vec X_k),
\nonumber\\&\vec K_2=h\vec a(t_{k+1},\vec X_k+\vec K_1)+(\DW_k+S_k\sqrt h)\vec b(t_{k+1},\vec X_k+\vec K_1),
\nonumber\\&\vec X_{k+1}=\vec X_k+\rat12(\vec K_1+\vec K_2),
\label{eq:ieuabj}
\end{align}
\begin{itemize}
\item where $\DW_k=\sqrt hZ_k$ for normal random $Z_k\sim N(0,1)$; 
\item and where $S_k=\pm1$\,, each alternative chosen with probability~$1/2$.
\end{itemize}
The above describes only one time step.
Repeat this time step $(t_m-t_0)/h$~times in order to integrate an \sde~\eqref{eq:sde1ab} from time $t=t_0$ to $t=t_m$\,.

The appeal of the scheme~\eqref{eq:ieuabj} as an entry to stochastic integrators is its close connection to deterministic integration schemes.
When the stochastic component vanishes, $\vec b=\vec 0$\,, the integration step~\eqref{eq:ieuabj} is precisely the Improved Euler, Heun, or Runge--Kutta~2 scheme that most engineering, science and mathematics students learn in undergraduate coursework.

This connection has another useful consequence in application: for systems with small noise we expect that the integration error of the \sde\ is only a little worse than that of the deterministic system.  Although Section~\ref{sec:peg} proves the typical \Ord{h}~error of the stochastic scheme~\eqref{eq:ieuabj}, as demonstrated in the examples of the next Section~\ref{sec:edet}, when the noise is small expect the error to be practically better than the order of error suggests.

Section~\ref{sec:peg} also proves that the scheme~\eqref{eq:ieuabj} integrates Stratonovich \sde{}s to~\Ord{h} provided one sets $S_k=0$ throughout (instead of choosing~$\pm 1$).

An outstanding challenge is to generalise this method~\eqref{eq:ieuabj} to multiple noise sources.

\section{Examples demonstrate \Ord{h} error is typical}
\label{sec:edet}

This section applies the scheme~\eqref{eq:ieuabj} to three example \sde{}s for which, for comparison, we know the analytic solution from Kloeden and Platen~\cite{Kloeden92}.  Two of the examples exhibit errors~\Ord{h}, as is typical, whereas the third exhibits a error~\Ord{h^2}, which occurs for both deterministic \ode{}s and a class of \sde{}s. These errors are `pathwise' errors which means that for any one given realisation~$\omega$ of the noise process~$W(t,\omega)$ we refer to the order of error as the time step~$h\to0$ for a fixed realisation~$\omega$. 

\subsection{Autonomous example}
\label{sec:aeg}

Consider the `autonomous' \sde
\begin{equation}
dX=\left[\rat12X+\sqrt{1+X^2}\right]dt +\sqrt{1+X^2}\,dW
\qtq{with}X(0)=0\,,
\label{eq:aeg}
\end{equation}
for some Wiener process~$W(t,\omega)$. The \sde\ is not strictly autonomous because the noise~$dW$ introduces time dependence; we use the term `autonomous' to indicate the drift~$a$ and volatility~$b$ are independent of time.  For the \sde~\eqref{eq:aeg}, Kloeden and Platen~\cite{Kloeden92} list the analytic solution as $X(t,\omega)=\sinh\big[t+W(t,\omega)\big]$.

Such analytic solutions are straightforwardly checked via the basic version of It\^o's formula, 
\begin{equation}
\text{if }X=f(t,w)\text{ for }w=W(t,\omega),\text{ then }
dX=\left(\D tf+\frac12\DD wf\right)\,dt +\D wf\,dW,
\label{eq:ito}
\end{equation}
which may be understood as the usual deterministic derivative rule $dX=(\D tf)\,dt +(\D wf)\,dW$ with the extra term~$\frac12(\DD wf)\,dt$ arising from a formal multi-variable Taylor series in the infinitesimals $dt$~and~$dw$, recognising formally that $dW^2=dt$ in effect, and all remaining infinitesimal products negligible~\cite[e.g.]{Higham01, Roberts08g}.

\begin{figure}
\centering
\begin{tabular}{c@{}cc}
\rotatebox{90}{\hspace{20ex}$X(t)$} &
\includegraphics[scale=1.2]{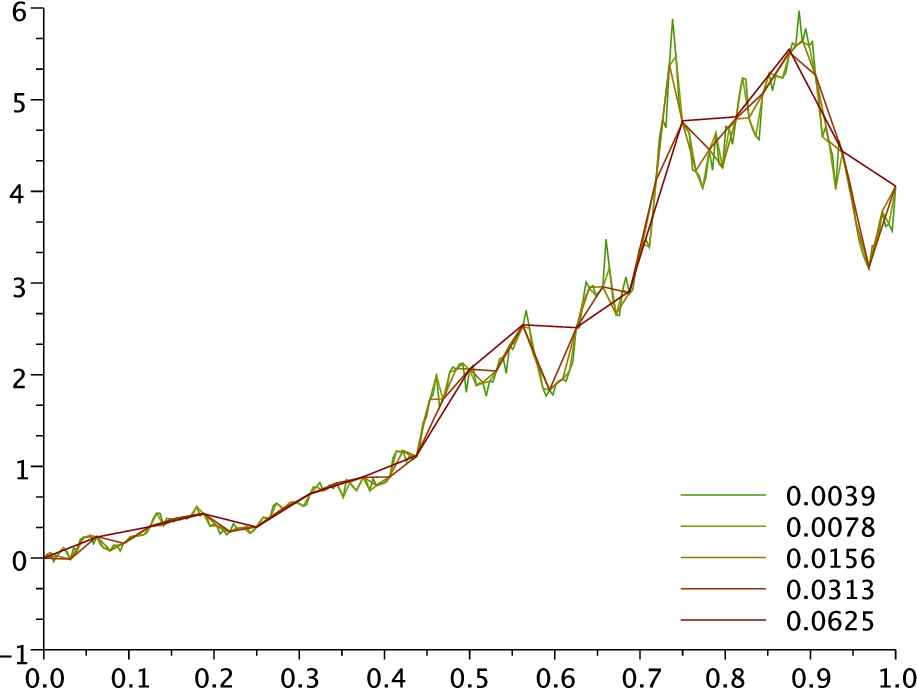}\\
& time $t$
\end{tabular}
\caption{As the time step is successively halved, $n=16,32,64,128,256$ time steps over $0\leq t\leq1$\,, the numerical solutions of the \sde~\eqref{eq:aeg} via the method~\eqref{eq:ieuabj} appear to converge.}
\label{fig:sde1absim2}
\end{figure}

The proposed numerical scheme~\eqref{eq:ieuabj} was applied to integrate the \sde~\eqref{eq:aeg} from $t=0$ to end time $t=1$ with a time step of $h=1/n$ for $n=2^{16},2^{15},\ldots,2^4$ steps.  For each of $700$~realisations of the noise~$W(t,\omega)$, the Wiener increments, $\DW\sim N(0,2^{-16})$, were generated on the finest time step, and subsequently aggregated to the corresponding increments for each realisation on the coarser time steps.  Figure~\ref{fig:sde1absim2} plots the predicted~$X(t,\omega)$ obtained from the numerical scheme~\eqref{eq:ieuabj} for just one realisation~$\omega$ using different time steps.  The predictions do appear to converge to a well defined stochastic process as the step size is repeatedly halved.

\begin{figure}
\centering
\begin{tabular}{cc}
\rotatebox{90}{\hspace{15ex}\textsc{rms} error} &
\includegraphics[scale=1.2]{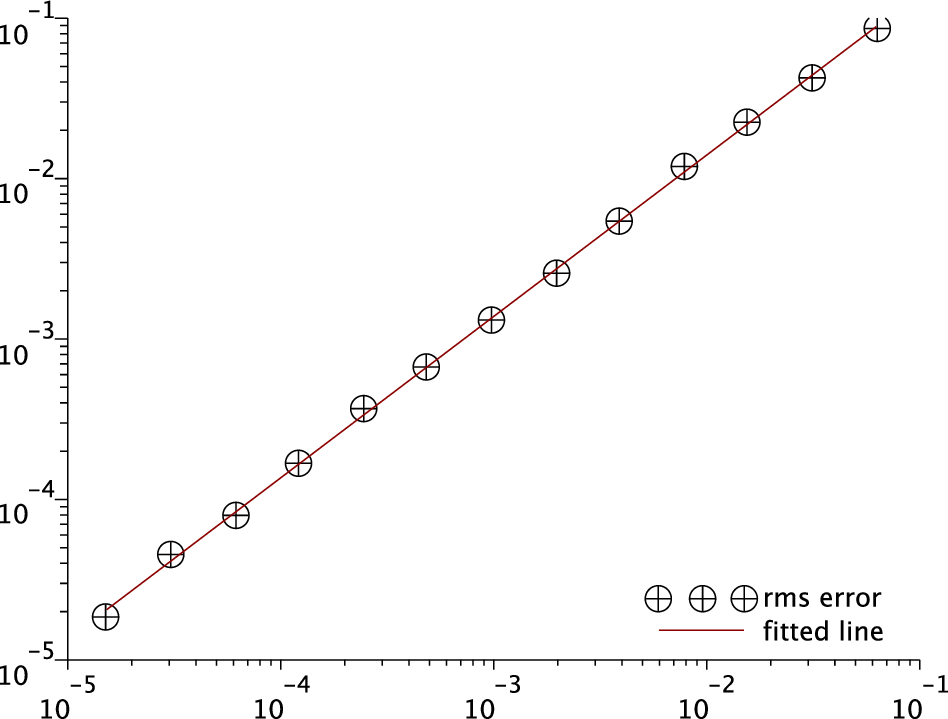}\\
& time step~$h$
\end{tabular}
\caption{Average over $700$ realisations at each of 13 different step sizes for the \sde~\eqref{eq:aeg}: at $t=1$\,, the \textsc{rms} error in the predicted~$X(1,\omega)$ decreases linearly in time step~$h$.}
\label{fig:sde1abll2}
\end{figure}

For each size of time step, Figure~\ref{fig:sde1abll2} uses the analytic solution to find the \textsc{rms} error of the predicted~$X(1,\omega)$, averaged over $700$~realisations~$\omega$.  This \textsc{rms} error estimates the square-root of the expectation $E[(X_m-X(1,\omega))^2]$. Figure~\ref{fig:sde1abll2} uses a log-log plot to show that the \textsc{rms} error decreases linearly with time step size~$h$ (over four orders of magnitude in time step).  That is, empirically we see the scheme~\eqref{eq:ieuabj} has \textsc{rms} error~\Ord{h}.

\subsection{Non-autonomous example}

Consider the `non-autonomous' \sde
\begin{equation}
dX=\left[\frac{X}{1+t}-\frac32X\left(1-\frac{X^2}{(1+t)^2}\right)^2\right]dt 
+(1+t)\left(1-\frac{X^2}{(1+t)^2}\right)^{3/2}dW,
\label{eq:naeg}
\end{equation}
with initial condition that $X(0)=0$\,,
for some Wiener process~$W(t,\omega)$.  Here both the drift~$a$ and the volatility~$b$ have explicit time dependence.  It\^o's formula~\eqref{eq:ito} confirms that the analytic solution to this \sde~\eqref{eq:aeg} is $X(t,\omega)=(1+t)W(t,\omega)/\sqrt{1+W(t,\omega)^2}$.

\begin{figure}
\centering
\begin{tabular}{cc}
\rotatebox{90}{\hspace{15ex}\textsc{rms} error} &
\includegraphics[scale=1.2]{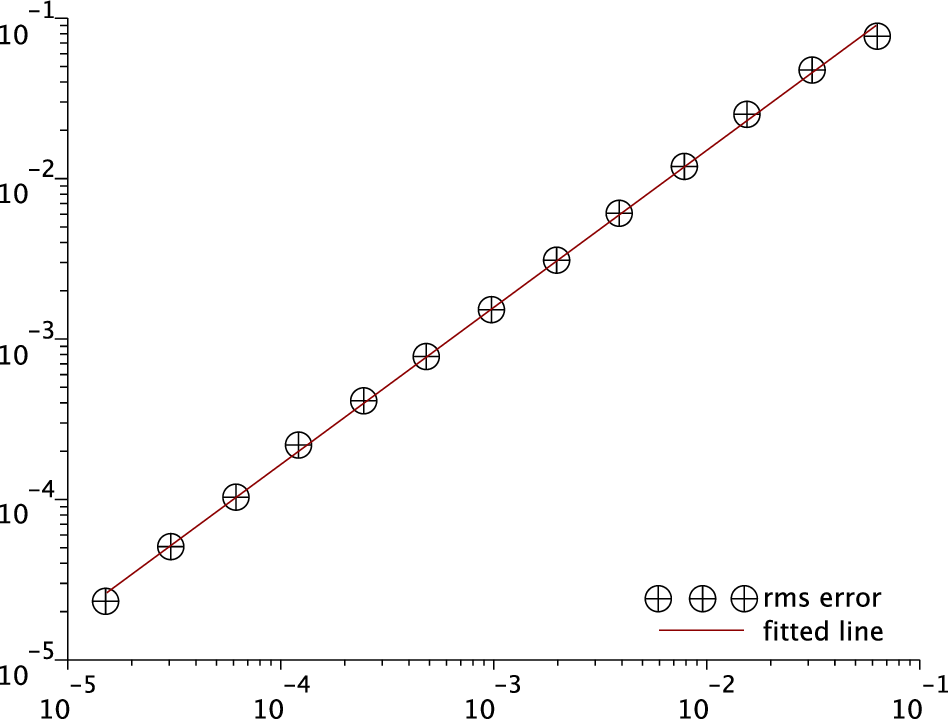}\\[-1ex]
& time step~$h$
\end{tabular}
\caption{Average over $700$ realisations at each of 13~different step sizes for the \sde~\eqref{eq:naeg}: at $t=1$\,, the \textsc{rms} error in the predicted~$X(1,\omega)$ decreases linearly in time step~$h$.}
\label{fig:sde1abll8}
\end{figure}

To determine the order of error of the scheme~\eqref{eq:ieuabj}, the same approach was adopted here as described in the previous
Section~\ref{sec:aeg}.  The slope of the log-log plot in Figure~\ref{fig:sde1abll8} shows that again the \textsc{rms} error of the predicted~$X(1,\omega)$ is~\Ord{h} for time step~$h$ over four orders of magnitude in~$h$.

\subsection{Example with second order error}
\label{sec:esoe}

Consider the following \sde\ linear in~$X$:
\begin{equation}
dX=\left[\frac{2X}{1+t}+(1+t)^2\right]dt +(1+t)^2dW
\quad\text{with }X(0)=1\,.
\label{eq:qeg}
\end{equation}
for some Wiener process~$W(t,\omega)$.  It\^o's formula~\eqref{eq:ito} confirms that the analytic solution to this \sde~\eqref{eq:qeg} is $X(t,\omega)=(1+t)^2\big[1+t+W(t,\omega)\big]$.

\begin{figure}
\centering
\begin{tabular}{cc}
\rotatebox{90}{\hspace{15ex}\textsc{rms} error} &
\includegraphics[scale=1.2]{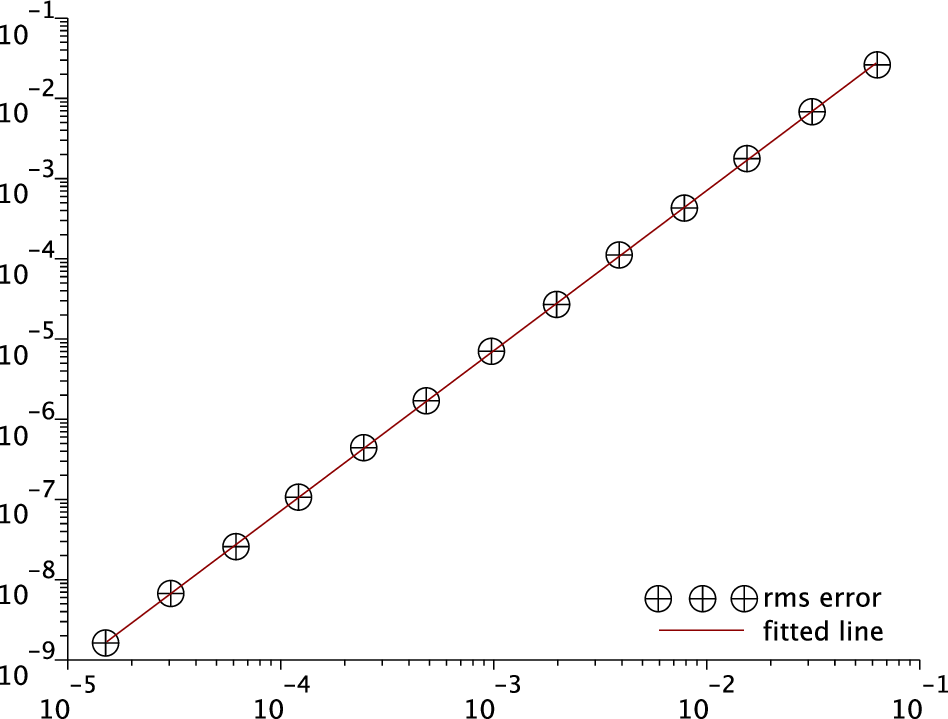}\\
& time step~$h$
\end{tabular}
\caption{Averaging over $700$ realisations at each of 13 different step sizes for the linear \sde~\eqref{eq:qeg}: at $t=1$\,, the \textsc{rms} error in the predicted~$X(1,\omega)$ decreases quadratically, like~$h^{2}$.}
\label{fig:sde1abll1}
\end{figure}

To determine the order of error of the scheme~\eqref{eq:ieuabj}, the same approach was adopted here as described in
Section~\ref{sec:aeg}.  The difference is that the slope of the log-log plot in Figure~\ref{fig:sde1abll1} shows that here the \textsc{rms} error of the predicted~$X(1,\omega)$ is~\Ord{h^2}.  There appears to be some \sde{}s for which the error of the scheme~\eqref{eq:ieuabj} is quadratic in the time step~$h$ rather than linear.

\begin{exercise} 
Use It\^o's formula~\eqref{eq:ito} to confirm the solutions given below satisfy the corresponding given \sde.
Apply the scheme~\eqref{eq:ieuabj} to some of the following \sde{}s and compare the predictions, for different time steps sizes, to the given analytic solution.  Perhaps adapt some of the code given by Higham~\cite[Listing~6]{Higham01}.
\begin{enumerate}
\item $dX=\rat12(X-t)\,dt+(X-t-2)\,dW$, $X(0)=3$; solution $X=2+t+\exp[W(t)]$.
\item $dX=X\,dW$, $X(0)=1$; solution $X=\exp[W(t)-t/2]$.
\item $dX=-X(1-X^2)\,dt+(1-X^2)\,dW$, $X(0)=0$; solution $X=\tanh[W(t)]$.
\item $dX=-X\,dt+e^{-t}dW$, $X(0)=0$; solution $X=e^{-t}W(t)$.
\item $dX=-\rat32X(1-X^2)^2dt+(1-X^2)^{3/2}dW$, $X(0)=0$; solution $X=W(t)/\sqrt{1+W(t)^2}$.
\end{enumerate} 
For which \sde{}s is the error~\Ord{h^2}?
\end{exercise}

\section{Prove \Ord{h} global error in general}
\label{sec:peg}

This section uses stochastic integration to establish the general order of accuracy of the proposed numerical integration scheme.

Proofs that numerical schemes do indeed approximate \sde\ solutions are often complex.  My plan here is to elaborate three successively more complicated cases, with the aim that you develop a feel for the analysis before it gets too complex.   Lemma~\ref{thm:rkbdw} first proves that the Runge--Kutta like scheme~\eqref{eq:ieuabj} approximates the simplest It\^o integrals $X=\intab b(t)\,dW$ to first order in the time step.  Second, section~\ref{sec:elsan} identifies a class of  linear \sde{}s with additive noise when the scheme~\eqref{eq:ieuabj} is of second order.  Third, section~\ref{sec:gegs} proves the first order global error of scheme~\eqref{eq:ieuabj} when applied to general \sde{}s.  Those familiar with stochastic It\^o integration could proceed directly to the third section~\ref{sec:gegs}.

One outcome of this section is to precisely `nail down' the requisite properties of the choice of signs~$S_j$ in the scheme~\eqref{eq:ieuabj}.

\subsection{Error for It\^o integrals}

This subsection establishes the order of error in computing the It\^o integral $X=\intab b(t)\,dW$ if one were to invoke the scheme~\eqref{eq:ieuabj} on the scalar \sde\ $dX=b(t)\,dW$.  Before proceeding, recall that two fundamental properties on the expectation and variance of It\^o integrals are widely useful~\cite[p.2]{Kloeden01} \cite[pp.101--3]{Roberts08g}:
\begin{align}
&\text{martingale property},
&&\E\left[\intab f(t,\omega)\,dW\right]=0\,;
\label{eq:marty}
\\&\text{It\^o isometry},
&&\E\left[\left(\intab f(t,\omega)\,dW\right)^2\right]
=\intab \E\left[f(t,\omega)^2\right]\,dt\,.
\label{eq:isom}
\end{align}
These empower us to quantify errors in the integrals that approximate solutions of \sde{}s as in the following lemma. 

\begin{lemma} \label{thm:rkbdw}
  The Runge--Kutta like scheme~\eqref{eq:ieuabj} has \Ord{h}~global error when applied to $dX=b(t)\,dW$ for functions~$b(t)$ twice differentiable. 
\end{lemma}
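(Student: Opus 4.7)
The plan is to compute the per-step error $\epsilon_k$ explicitly, show it is mean zero with $L^2$-size $\Ord{h^{3/2}}$, and then use the martingale-difference structure to sum variances across the $N=\Ord{1/h}$ steps, producing global root-mean-square error $\Ord{h}$.

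First I would specialise~\eqref{eq:ieuabj} to $\vec a\equiv\vec 0$ and $\vec b(t,\vec X)=b(t)$. A short computation gives the numerical increment
\[
X_{k+1}-X_k \;=\; \tfrac12\DW_k\,[b(t_k)+b(t_{k+1})] \;+\; \tfrac12 S_k\sqrt h\,[b(t_{k+1})-b(t_k)].
\]
Using twice-differentiability I would Taylor expand $b(t_{k+1})=b(t_k)+hb'(t_k)+\Ord{h^2}$, and expand the exact increment similarly as
\[
\int_{t_k}^{t_{k+1}}\!b(t)\,dW \;=\; b(t_k)\DW_k \;+\; b'(t_k)\!\int_{t_k}^{t_{k+1}}\!(t-t_k)\,dW \;+\; R_k,
\]
where the stochastic remainder $R_k$ has $L^2$-size $\Ord{h^{5/2}}$ by the It\^o isometry~\eqref{eq:isom}.

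Subtracting, the per-step error reduces, up to terms of $L^2$-size $\Ord{h^2}$, to
\[
\epsilon_k \;=\; b'(t_k)\Bigl[\tfrac12 h\,\DW_k \;+\; \tfrac12 S_k h^{3/2} \;-\; \int_{t_k}^{t_{k+1}}\!(t-t_k)\,dW\Bigr].
\]
Each bracketed piece has mean zero by the martingale property~\eqref{eq:marty} combined with $\E[S_k]=0$, so $\E[\epsilon_k]=0$. A direct variance computation via~\eqref{eq:isom}, using $\E[S_k^2]=1$ and $\E[\DW_k^2]=h$, the independence of $S_k$ from the Wiener increment over $[t_k,t_{k+1}]$, and the cross-covariance $\E\bigl[\DW_k\!\int_{t_k}^{t_{k+1}}\!(t-t_k)\,dW\bigr]=\int_{t_k}^{t_{k+1}}\!(t-t_k)\,dt=\tfrac12 h^2$, shows that each contribution is $\Ord{h^3}$ and hence $\E[\epsilon_k^2]=\Ord{h^3}$.

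Finally, because $\epsilon_k$ is measurable with respect to the $\sigma$-algebra generated by the path of $W$ on $[0,t_{k+1}]$ together with $S_k$, while distinct $S_j$ and Wiener increments on disjoint intervals are mutually independent, the family $\{\epsilon_k\}$ is a martingale-difference sequence. Therefore
\[
\E\Bigl[\Bigl(\sum_{k=0}^{N-1}\epsilon_k\Bigr)^{\!2}\Bigr] \;=\; \sum_{k=0}^{N-1}\E[\epsilon_k^2] \;=\; N\cdot\Ord{h^3} \;=\; \Ord{h^2},
\]
yielding $\Ord{h}$ global RMS error. The main obstacle I anticipate is not any single step but the bookkeeping: verifying that the $\Ord{h^2}$ Taylor-remainder contributions to $\epsilon_k$ do not enlarge the leading variance, and checking that all covariances among the three stochastic pieces in $\epsilon_k$ genuinely remain $\Ord{h^3}$ rather than conspiring to produce a larger term.
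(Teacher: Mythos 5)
Your proposal is correct and follows essentially the same route as the paper's proof: identical per-step error decomposition into the $S_k h^{3/2}$ term, the sub-step stochastic integral, and a $b''$ remainder, with mean zero from $\E[S_k]=0$ and the martingale property, per-step variance $\Ord{h^3}$ via the It\^o isometry, and summation over $\Ord{1/h}$ independent steps to get global RMS error $\Ord{h}$. The only cosmetic difference is that you Taylor-expand about $t_k$ (so a $\tfrac12 h\,\DW_k$ piece survives in the bracket), whereas the paper uses the midpoint-centred linear interpolant so that the $\DW$ terms cancel exactly; the resulting brackets agree since $\tfrac12 h\,\DW_k-\int_{t_k}^{t_{k+1}}(t-t_k)\,dW=-\int_{t_k}^{t_{k+1}}\bigl(t-t_k-\tfrac h2\bigr)\,dW$.
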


\begin{proof}
Without loss of generality, start with the time step from $t_0=0$ to $t_1=t_0+h=h$\,.  Applied to the very simple \sde\ $dX=b(t)\,dW$ one step of the scheme~\eqref{eq:ieuabj} computes
\begin{equation*}
K_1=(\DW-S\sqrt h)b_0\,,\quad
K_2=(\DW+S\sqrt h)b_1\,,
\end{equation*}
and then estimates the change in~$X$ as
\begin{equation}
\Delta\hat X=\rat12(b_0+b_1)\DW+\rat12(b_1-b_0)S\sqrt h\,,
\label{eq:dxbdw}
\end{equation}
where  the integrand values $b_0=b(0)$ and $b_1=b(h)$.
The classic polynomial approximation theorem~\cite[p.800, e.g.]{Kreyszig9} relates this estimate~\eqref{eq:dxbdw} to the exact integral.  Here write the integrand as the linear interpolant with remainder:
\begin{equation*}
b(t)=\rat12(b_1+b_0)+\rat1h(b_1-b_0)(t-h/2) 
+\rat12t(t-h)b''(\tau)
\end{equation*}
for some $0\leq\tau(t)\leq h$\,.
Then the exact change in~$X(t)$ is
\begin{align}
\Delta X=\int_0^hb(t)\,dW
={}&\rat12(b_1+b_0)\DW
+\rat1h(b_1-b_0)\int_0^h(t-h/2)\,dW 
\nonumber\\&{}
+\rat12\int_0^ht(t-h)b''(\tau)\,dW.
\label{eq:nmbdwa}
\end{align}
The error in one step of the scheme~\eqref{eq:ieuabj} is the difference between the changes~\eqref{eq:dxbdw} and~\eqref{eq:nmbdwa}.
That is, the true integral change $\Delta X=\Delta\hat X+\epsilon_0$ where the error
\begin{equation}
\epsilon_0=\frac{b_1-b_0}h\left[-\rat12S h^{3/2}+\int_0^h(t-h/2)\,dW\right]
+\rat12\int_0^ht(t-h)b''(\tau)\,dW.
\label{eq:oserrbdw}
\end{equation}
How big is this error?  First take expectations, invoke the martingale property~\eqref{eq:marty} for the two stochastic integrals, and see that $\E[\epsilon_0]=0$ provided $\E[S]=0$\,.  \emph{Thus the signs~$S$ must be chosen with mean zero.} 

Second compute the variance of the error~$\epsilon_0$ to see the size of the fluctuations in the error.  Since the expectation $\E[\epsilon_0]=0$\,, the variance $\Var[\epsilon_0]=\E[\epsilon_0^2]$.
Look at various contributions in turn.  The first term in the error~\eqref{eq:oserrbdw} has variance $\E[(Sh^{3/2})^2]=h^3\E[S^2]=\Ord{h^3}$ provided the signs~$S$ have bounded variance.  Choosing the signs~$S$ independently of the noise~$W$ there are then no correlations between the $S$~terms and the other two terms.  The second term in the error~\eqref{eq:oserrbdw} has variance
\begin{align*}
\E\left[\left(\int_0^h(t-h/2)\,dW\right)^2\right]
&{}=\int_0^h (t-h/2)^2 dt\quad\text{by It\^o isometry~\eqref{eq:isom}}
\\&{}=\rat1{12}h^3=\Ord{h^3}.
\end{align*}
The third term in the error~\eqref{eq:oserrbdw}, by the It\^o isometry~\eqref{eq:isom}, has variance
\begin{align}
\E\left[\left(\int_0^ht(t-h)b''(\tau)\,dW\right)^2\right]
&{}=\int_0^h t^2(t-h)^2b''(\tau)^2 dt
\nonumber\\&{}\leq B_2^2\int_0^h t^2(t-h)^2 dt =\frac1{30}B_2^2h^5,
\label{eq:bdwvar}
\end{align}
when the second derivative is bounded, $|b''(t)|\leq B_2$\,.
Lastly, the correlation between these previous two integrals is small as, by a slightly more general version of the It\^o isometry~\eqref{eq:isom},
\begin{align*}
&\left|\E\left[\int_0^h(t-h/2)\,dW\int_0^ht(t-h)b''(\tau)\,dW\right] \right|
\\&{}=\left|\int_0^h (t-h/2)t(t-h)b''(\tau)\,dt\right|
\\&{}\leq B_2\int_0^h \left|(t-h/2)t(t-h)\right|dt 
=\Ord{h^4}.
\end{align*}
Hence the local, one step, error is dominated by the first two contributions and has variance $\Var[\epsilon_0]=\Ord{h^3}$.

To estimate the global integral, $\intab b(t)\,dW$, we take $n=\Ord{1/h}$ time steps.  With $n$~steps the global error is the sum of $n$~local errors: the scheme~\eqref{eq:ieuabj} approximates the correct solution with global error
$\epsilon=\sum_{j=0}^{n-1}\epsilon_j$\,.  Firstly, $\E[\epsilon]=0$ as $\E[\epsilon_j]=0$ for all time steps.  Secondly, as the errors on each time step are independent, the variance
\begin{align*}
\Var[\epsilon]&{}=\sum_{j=0}^{n-1}\Var[\epsilon_j]
= n\Var[\epsilon_0]=\Ord{nh^3}=\Ord{h^2}.
\end{align*}
Thus, for the \sde\ $dX=b(t)\,dW$, the scheme~\eqref{eq:ieuabj} has global error of size~\Ord{h}.
\end{proof}

\subsection{Error for linear SDEs with additive noise}
\label{sec:elsan}

This second lemma addresses somewhat more general scalar \sde{}s.  It not only serves as a `stepping stone' to a full theorem, but illustrates two other interesting properties.  Firstly, we identify a class of \sde{}s for which the scheme~\eqref{eq:ieuabj} is second order accurate in the time step as seen in Example~\ref{sec:esoe}.  Secondly, the proof suggests that the sign~$S$ in the scheme~\eqref{eq:ieuabj} relates to sub-step properties of the noise~$W$ that are independent of the increment~$\DW$.  

\begin{lemma} \label{thm:rkaxbdw}
  The Runge--Kutta like scheme~\eqref{eq:ieuabj} has global error~\Ord{h} when applied to the additive noise, linear \sde\ $dX=a(t)X\,dt+b(t)\,dW$ for functions $a$~and~$b$ twice differentiable.   Further, in the exact differential case when $ab=db/dt$ (a solution to the \sde\  is then $X=b(t)W$) the global error is~\Ord{h^2}. 
\end{lemma}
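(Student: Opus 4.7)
My plan is to follow the structure of the proof of Lemma~\ref{thm:rkbdw}, now additionally tracking a deterministic-drift term and a new drift-noise cross term. First I will substitute $\vec a(t,X)=a(t)X$ and $\vec b(t,X)=b(t)$ into one step of the scheme~\eqref{eq:ieuabj} and expand $K_1$ and $K_2$ to obtain
\begin{equation*}
\Delta\hat X = \rat12 h(a_0+a_1)X_0 + \rat12 h^2 a_0 a_1 X_0
+ \rat12(b_0+b_1)\DW + \rat12(b_1-b_0)S\sqrt h
+ \rat12 h a_1 b_0(\DW-S\sqrt h),
\end{equation*}
so that the first two pieces form the Improved Euler quadrature for $X_0\int_0^h a\,dt$, the third and fourth reproduce the pure-noise contribution already analysed in Lemma~\ref{thm:rkbdw}, and the last is a new drift-noise cross term generated by $b_0\DW$ entering $K_2$ through the drift slope~$ha_1$.

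Next I will compute the exact change $\Delta X = \int_0^h a(t)X(t)\,dt + \int_0^h b(t)\,dW$ by substituting the mild-solution identity $X(t)=X_0+\int_0^t aX\,ds+\int_0^t b\,dW$ inside the drift integral. This yields three pieces: a Riemann integral $X_0\int_0^h a\,dt$ with trapezoidal remainder of order~$\Ord{h^3}$; an iterated deterministic integral matched by the $\rat12 h^2 a_0 a_1 X_0$ term modulo~$\Ord{h^3}$; and a mixed iterated integral $\int_0^h a(t)\int_0^t b(s)\,dW_s\,dt$. For the mixed piece I will apply a stochastic Fubini identity to rewrite it as $\int_0^h b(s)[A(h)-A(s)]\,dW$, where~$A$ is an antiderivative of~$a$, Taylor-expand $A(h)-A(s)=(h-s)a_1+\Ord{h^2}$, and bound its mean-square distance from the scheme's $\rat12 h a_1 b_0\DW$ via the It\^o isometry~\eqref{eq:isom}.

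Combining these bounds term by term, I expect the local error $\epsilon_0 = \Delta X - \Delta\hat X$ to have $\E[\epsilon_0]=0$ by the martingale property~\eqref{eq:marty} together with $\E[S]=0$ and the trapezoidal Taylor remainder, and $\Var[\epsilon_0]=\Ord{h^3}$ by the It\^o isometry applied to each mean-zero stochastic remainder, using independence of~$S$ from~$W$ to eliminate cross-correlations. Summing the $n=\Ord{1/h}$ independent local errors then yields $\Var[\epsilon]=n\Var[\epsilon_0]=\Ord{h^2}$, and hence global strong error~$\Ord{h}$, exactly as in Lemma~\ref{thm:rkbdw}.

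For the exact-differential case $ab=b'$, the It\^o identity $d(bW)=b'W\,dt+b\,dW=abW\,dt+b\,dW$ confirms $X=bW$ solves the \sde, so $\Delta X = (b_1-b_0)W_0 + b_1\DW$. Taylor-expanding $b_1$ about~$t=0$ using $b'=ab$ at both endpoints, I will match $\Delta\hat X$ against $\Delta X$ term by term: I expect the $S\sqrt h$ pieces to cancel exactly, the new cross term $\rat12 h a_1 b_0\DW$ together with $\rat12(b_1-b_0)\DW$ to supply the $hb_0'\DW$ of the Taylor expansion, and the trapezoidal drift to supply the~$W_0$ terms. The local $L^2$ error should then drop from $\Ord{h^{3/2}}$ to $\Ord{h^{5/2}}$, giving global error~$\Ord{h^2}$ as observed in Example~\ref{sec:esoe}. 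The step I anticipate as most delicate is tracking the mixed iterated integral $\int_0^h a(t)\int_0^t b\,dW\,dt$ finely enough to distinguish residuals that are automatically killed by $\E[S]=0$ and independence from those that only cancel after the algebraic identity $ab=b'$ is substituted.
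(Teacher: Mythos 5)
Your first part retraces the paper's own proof almost line for line: the one-step expansion you write down is precisely the paper's~\eqref{eq:nmanls}, the Picard-style substitution of the mild solution into the drift integral produces the same iterated integrals, your ``stochastic Fubini'' step on $\int_0^h a_t\int_0^t b_s\,dW_s\,dt$ is the paper's change of order of integration, and the martingale property~\eqref{eq:marty} plus It\^o isometry~\eqref{eq:isom} give the same local variance~\Ord{h^3} and global error~\Ord{h}. Where you genuinely diverge is the second, exact-differential claim. The paper does no new computation there: it packages the entire residual into the factored form~\eqref{eq:nmaxbdwd}, $\big[\rat12Sh^{3/2}-\int_0^h(t-\rat h2)\,dW_t\big]\big\{-a_1b_0+\Delta b/h\big\}+\Ord{h^3,h^5}$, and observes that $ab=db/dt$ makes the curly-brace factor~\Ord{h}, so the local error drops to~\Ord{h^3,h^5}. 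You instead substitute the explicit solution $X=b(t)W$ and match $\Delta X=(\Delta b)W_0+b_1\DW$ against the scheme term by term. Both give the right order, but the paper's factorisation buys more: it explains structurally why the order jumps (a single common factor degenerates), it holds for an arbitrary starting value $X_0$ and so feeds directly into the local-to-global accumulation, and it motivates the paper's subsequent discussion of how the sign~$S$ mimics the sub-step integral $\int_0^h(t-\rat h2)\,dW_t$. Your route is tied to the particular solution $bW$, so for general initial data you should add the (easy) remark that the \sde\ and the scheme are affine in~$X$ and the homogeneous part is plain deterministic Improved Euler with local error~\Ord{h^3}. Finally, one small correction: the $S\sqrt h$ pieces do not cancel exactly---their net coefficient is $\rat12(\Delta b - ha_1b_0)$, which under $ab=db/dt$ is only~\Ord{h^2}, not zero---but $\rat12S\sqrt h\cdot\Ord{h^2}=\Ord{0,h^5}$ is precisely the size required, so your conclusion stands.
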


\begin{proof}
In this case, straightforward algebra shows the first step in the scheme~\eqref{eq:ieuabj} predicts the change
\begin{align}
\Delta X={}&h\rat12(a_0+a_1)X_0
+\rat12h^2a_0a_1X_0
+\rat12(b_0+b_1)\DW
\nonumber\\&{}
+\rat12a_1b_0h(\DW-S\sqrt h)
+\rat12S\sqrt h\Delta b\,,
\label{eq:nmanls}
\end{align}
where the coefficient values $a_0=a(0)$, $a_1=a(h)$, $b_0=b(0)$ and $b_1=b(h)$.
We compare this approximate change over the time step~$h$ with the true change using iterated integrals.  For simplicity we also use subscripts to denote dependence upon `time' variables~$t$, $s$~and~$r$.  Start by writing the \sde\ $dX=a_tX_t\,dt+b_t\,dW$ as an integral over the first time step:
\begin{align}
\Delta X=
{}&\int_0^ha_tX_t\,dt +\int_0^hb_t\,dW_t
\nonumber\\&\text{[substituting $X_t=X_0+\Delta X$ inside the first integral]}
\nonumber\\={}& \int_0^ha_t\left[X_0+\int_0^ta_sX_s\,ds +\int_0^tb_s\,dW_s\right]\,dt +\int_0^hb_t\,dW_t
\nonumber\\={}& X_0\int_0^ha_t\,dt+\int_0^ha_t\int_0^ta_sX_s\,ds\,dt 
\nonumber\\&{}
+\int_0^ha_t\int_0^tb_s\,dW_s\,dt +\int_0^hb_t\,dW_t
\nonumber\\&\text{[substituting $X_s=X_0+\Delta X$ inside the second integral]}
\nonumber\\={}& X_0\int_0^ha_t\,dt
+\int_0^ha_t\int_0^ta_s\left[X_0+\int_0^sa_rX_r\,dr +\int_0^sb_r\,dW_r\right]\,ds\,dt 
\nonumber\\&{}
+\int_0^ha_t\int_0^tb_s\,dW_s\,dt 
+\int_0^hb_t\,dW_t
\nonumber\\={}& 
X_0\int_0^ha_t\,dt
+X_0\int_0^ha_t\int_0^ta_s\,ds\,dt
+\int_0^ha_t\int_0^ta_s\int_0^sa_rX_r\,dr\,ds\,dt 
\nonumber\\&{}
+\int_0^ha_t\int_0^ta_s\int_0^sb_r\,dW_r\,ds\,dt 
+\int_0^ha_t\int_0^tb_s\,dW_s\,dt 
+\int_0^hb_t\,dW_t\,.
\label{eq:nmanli}
\end{align}
For the last part of the lemma on the case of higher order error, we need to expand to this level of detail in six integrals.
Of these six integrals, some significantly match the components of the numerical step~\eqref{eq:nmanls} and some just contribute to the error.  Recall that the proof of Lemma~\ref{thm:rkbdw} identified that errors had both mean and variance. To cater for these two characteristics of errors, and with perhaps some abuse of notation, I introduce the notation~\Ord{h^p,h^q} to denote quantities with mean~\Ord{h^p} and variance~\Ord{h^q}.  For example, \Ord{h^p,0} classifies deterministic quantities~\Ord{h^p}, whereas \Ord{0,h^q}~characterises zero mean stochastic quantities of standard deviation scaling like~$h^{q/2}$.   The previous proof looked closely at the variances of error terms; here we simplify by focussing only upon their order of magnitude.  In particular, let's show that the six integrals in~\eqref{eq:nmanli} match the numerical step~\eqref{eq:nmanls} to an error~$\Ord{h^3,h^5}$.

Consider separately the integrals in~\eqref{eq:nmanli}.
\begin{itemize}
\item Firstly, $X_0\int_0^ha_t\,dt=X_0h\rat12(a_0+a_1)+\Ord{h^3}$ by the classic trapezoidal rule. This matches the first component in the numerical~\eqref{eq:nmanls}.
\item Secondly, using the linear interpolation $a_t=a_0+\frac{\Delta a}ht+\Ord{t^2}$, where as usual $\Delta a=a_1-a_0$\,, the double integral
\begin{align*}
\int_0^ha_t\int_0^ta_s\,ds\,dt
&{}=\int_0^h \Big(a_0+\frac{\Delta a}ht\Big)\Big(a_0t+\frac{\Delta a}{2h}t^2\Big)+\Ord{t^3}\,dt
\\&{}=\int_0^h a_0^2t+a_0\frac{3\Delta a}{2h}t^2 +\Ord{t^3}\,dt
\\&{}= \rat12a_0^2h^2+a_0\frac{\Delta a}{2}h^2 +\Ord{h^4}
\\&{}= \rat12a_0a_1h^2 +\Ord{h^4}.
\end{align*}
Multiplied by~$X_0$, this double integral matches the second term in the numerical~\eqref{eq:nmanls}.
\item Thirdly, the triple integral
\begin{equation*}
\int_0^ha_t\int_0^ta_s\int_0^sa_rX_r\,dr\,ds\,dt =\Ord{h^3}
\end{equation*}
because, as seen in the previous two items, each ordinary integration over a time of~$\Ord{h}$ multiplies the order of the term by a power of~$h$.

\item Fourthly, look at the single stochastic integral in~\eqref{eq:nmanli}, the last term. From the proof of the previous lemma, equations~\eqref{eq:nmbdwa} and~\eqref{eq:bdwvar} give
\begin{equation}
\int_0^hb_t\,dW_t
=\rat12(b_1+b_0)\DW
+\frac{\Delta b}h\int_0^h\big(t-\rat h2\big)\,dW_t 
+\Ord{0,h^5}.
\label{eq:nmaxbbdw}
\end{equation}
The first term here matches the third term in the numerical~\eqref{eq:nmanls}.  The second term on the right-hand side is an integral remainder that will be dealt with after the next two items.
\item Fifthly, change the order of integration in the double integral
\begin{align*}
&\int_0^ha_t\int_0^tb_s\,dW_s\,dt
\\&{}=\int_0^hb_s\int_s^ha_t\,dt\,dW_s
\\&{}=\int_0^hb_s\int_s^ha_1+\Ord{h-t}\,dt\,dW_s
\\&{}=\int_0^hb_sa_1(h-s)+\Ord{(h-s)^2}\,dW_s
\\&{}=\int_0^hb_0a_1(h-t)+\Ord{h^2}\,dW_t
\\&\qquad\text{[by the martingale property~\eqref{eq:marty} and It\^o isometry~\eqref{eq:isom}]}
\\&{}=\int_0^hb_0a_1(h-t)\,dW_t+\Ord{0,h^5}
\\&{}=\int_0^h\rat h2b_0a_1+b_0a_1\big(\rat h2-t\big)\,dW_t+\Ord{0,h^5}
\\&{}=\rat12hb_0a_1\DW-b_0a_1\int_0^h\big(t-\rat h2\big)\,dW_t+\Ord{0,h^5}
\end{align*}
The first term here matches the first part of the fourth term in the numerical~\eqref{eq:nmanls}.  The second term on the right-hand side is an integral remainder that will be dealt with after the last item.

\item Lastly, the triple integral
\begin{equation*}
\int_0^ha_t\int_0^ta_s\int_0^sb_r\,dW_r\,ds\,dt=\Ord{0,h^5}
\end{equation*}
because, as in the last item, changing the order of integration to do the stochastic integral last, the integral transforms to $\int_0^h\Ord{h^2}\,dW$ which by the martingale~\eqref{eq:marty} and It\^o isometry~\eqref{eq:isom} is~$\Ord{0,h^5}$.
\end{itemize}
Hence we now identify that the difference between the Runge--Kutta like step~\eqref{eq:nmanls} and the change~\eqref{eq:nmanli} in the true solution is the error
\begin{align}
\epsilon_0={}&{}-\rat12a_1b_0h^{3/2}S+\rat12S\sqrt h\Delta b
+b_0a_1\int_0^h\big(t-\rat h2\big)\,dW_t 
\nonumber\\&{}
-\frac{\Delta b}h\int_0^h\big(t-\rat h2\big)\,dW_t
+\Ord{h^3}+\Ord{0,h^5}
\nonumber\\={}&
\left[\rat12Sh^{3/2}-\int_0^h\big(t-\rat h2\big)\,dW_t\right]
\left\{-a_1b_0+\frac{\Delta b}h\right\}
+\Ord{h^3,h^{5}}.
\label{eq:nmaxbdwd}
\end{align}
Two cases arise corresponding to the main and the provisional parts of lemma~\ref{thm:rkaxbdw}.
\begin{itemize}
\item In the general case, the factor in square brackets,~$[\cdot]$, in~\eqref{eq:nmaxbdwd} determines the order of error.  Choosing the signs~$S$ randomly with mean zero then $Sh^{3/2}=\Ord{0,h^3}$. Recall the integral $\int_0^h\big(t-\rat h2\big)\,dW_t=\Ord{0,h^3}$ also. Thus the leading error is then~\Ord{h^3,h^3}.  This is the local one step error.  Summing over \Ord{1/h} time steps gives that the global error is~\Ord{h^2,h^2}.  That is, the error due to the noise dominates, variance~\Ord{h^2}, and is generally first order in~$h$ as the standard deviation of the error is of order~$h$.

But as the noise decreases to zero, $b\to0$, the factor in curly braces,~$\{\cdot\}$, goes to zero.  In this decrease the order of error~\eqref{eq:nmaxbdwd} transitions smoothly to the deterministic case of local error~\Ord{h^3} and hence global error~\Ord{h^2}.

\item The second case is when the factor in braces in~\eqref{eq:nmaxbdwd} is small: this occurs for the integrable case $ab=db/dt$ as then the term in braces is~\Ord{h} so that the whole error~\eqref{eq:nmaxbdwd} becomes~\Ord{h^3,h^5}.  Again this is the local one step error.  Summing over \Ord{1/h}~time steps gives that the global error is~\Ord{h^2,h^4}.  That is, in this case the error is of second order in time step~$h$, both through the deterministic error and the variance of the stochastic errors.  Figure~\ref{fig:sde1abll1} shows another case when the error is second order.
\end{itemize}
This concludes the proof.
\end{proof}

Interestingly, we would decrease the size of the factor in brackets in the error~\eqref{eq:nmaxbdwd} by choosing the sign~$S$ to cancel as much as possible the integral $\int_0^h\big(t-\rat h2\big)\,dW_t$\,.  This sub-step integral is one characteristic of the sub-step structure of the noise, and is independent of~$\DW$.  If we knew this integral, then we could choose the sign~$S$ to cause some error cancellation; however, generally we do not know the sub-step integral.  But this connection between the signs~$S$ and the integral $\int_0^h\big(t-\rat h2\big)\,dW_t$ does suggest that the sign~$S$ relates to sub-step characteristics of the noise process~$W$.

For example, if one used \idx{Brownian bridge}s to successively refine the numerical approximations for smaller and smaller time steps, then it may be preferable to construct a Brownian bridge compatible with the signs~$S$ used on the immediately coarser step size.\footnote{The Brownian bridge stochastically interpolates a Wiener process to half-steps in time if all one knows is the increment~$\DW$ over a time step~$h$.  The Brownian bridge asserts that the change over half the time step,~$h/2$, is $\rat12\DW-\rat12\sqrt hZ$ for some $Z\sim N(0,1)$; the change over the second half of the time step is correspondingly $\rat12\DW+\rat12\sqrt hZ$\,.  Factoring out the half, these sub-steps are $\rat12(\DW\mp Z\sqrt h)$ which match the factors $(\DW \mp S\sqrt h)$ used by the scheme~\eqref{eq:ieuabj}: the discrete signs~$S=\mp1$ have mean zero and variance one just like the normally distributed~$Z$ of the Brownian bridge.}

\subsection{Global error for general SDEs}
\label{sec:gegs}

The previous section~\ref{sec:elsan} established the order of error for a special class of linear \sde{}s.  The procedure is to repeatedly substitute integral expressions for the unknown whereever it appears (analogous to Picard iteration).  In section~\ref{sec:elsan} each substitution increased the number of integrals in the expression by two.  For general \sde{}s, this subsection employs the same procedure, but now the number of integrals doubles in each substitution.  The rapid increase in the number of integrals is a major complication, so we only consider the integrals necessary to establish that the global error is~\Ord{h}.  

Further, the following theorem is also proven for vector \sde{}s in~$\mathbb R^n$, whereas the previous two subsection sections only considered special scalar \sde{}s.

\begin{theorem} \label{thm:nm1ito}
The Runge--Kutta like numerical scheme~\eqref{eq:ieuabj} generally has global error~\Ord{h} when applied to the \sde~\eqref{eq:sde1ab} for sufficiently smooth drift and volatility functions $\vec a(t,x)$~and~$\vec b(t,x)$. 
\end{theorem}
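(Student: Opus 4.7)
The plan is to follow the strategy of Lemma~\ref{thm:rkaxbdw} but now with both drift and volatility depending nonlinearly on~$\vec X$: expand both the true one-step change~$\Delta\vec X$ and the numerical step~$\rat12(\vec K_1+\vec K_2)$ as iterated-integral/Taylor series in $h$ and~$\DW$, identify the local one-step error~$\epsilon_0$ by term-by-term comparison, and then accumulate local errors over $n=\Ord{1/h}$ steps to obtain the global bound. Because the scheme is now genuinely nonlinear, an additional stability argument is needed to ensure errors made in earlier steps do not amplify through subsequent drift and volatility evaluations.

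First I would Picard-expand the true change, writing $\Delta\vec X=\int_0^h\vec a(t,\vec X_t)\,dt+\int_0^h\vec b(t,\vec X_t)\,dW_t$, Taylor expanding $\vec a$ and $\vec b$ about $(0,\vec X_0)$, and substituting the leading Picard iterate $\vec X_t\approx\vec X_0+\vec b_0W_t$ into the volatility integral. The It\^o identity $\int_0^hW_t\,dW_t=\rat12(\DW^2-h)$ then delivers the Milstein-type expansion
\begin{equation*}
\Delta\vec X=\vec a_0 h+\vec b_0\DW+\rat12(\DW^2-h)(\nabla\vec b_0)\vec b_0+\text{remainders},
\end{equation*}
which is what the numerical scheme must reproduce up to~$\Ord{h^2,h^3}$.

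Second I would Taylor expand the numerical step~\eqref{eq:ieuabj} about $(t_0,\vec X_0)$. The noise contribution in $\rat12(\vec K_1+\vec K_2)$ generates the telltale cross product
\begin{equation*}
\rat12(\DW+S\sqrt h)(\DW-S\sqrt h)(\nabla\vec b_0)\vec b_0=\rat12(\DW^2-S^2h)(\nabla\vec b_0)\vec b_0,
\end{equation*}
which, under the imposed $S^2=1$, matches the exact Milstein correction. This algebraic cancellation is the crux of why the scheme attains strong order one and `nails down' the sign choice advertised in the introduction: $S^2=1$ is needed to kill the $-h$, $\E[S]=0$ is needed so that the surplus $Sh^{3/2}$ terms contribute only to the variance, and $S\equiv 0$ is appropriate in the Stratonovich interpretation where no Milstein correction appears.

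Third, subtracting the two expansions isolates a local error~$\epsilon_0$ composed of sub-step integrals like $\int_0^h(t-h/2)\,dW_t$, deterministic Taylor remainders, and $S$-terms such as~$Sh^{3/2}$. Invoking the martingale property~\eqref{eq:marty}, the It\^o isometry~\eqref{eq:isom}, and the properties $\E[S]=0$ and $\E[S^2]=1$ as in Lemma~\ref{thm:rkaxbdw}, I expect $\epsilon_0=\Ord{h^2,h^3}$. A discrete Gr\"onwall inequality, using the Lipschitz continuity of $\vec a$ and $\vec b$, then propagates local errors through subsequent steps without uncontrolled amplification, and summing the $n=\Ord{1/h}$ local contributions yields global mean error~$\Ord{h}$ and variance~$\Ord{h^2}$, hence RMS error~$\Ord{h}$. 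The main obstacle will be the combinatorial proliferation of iterated integrals under Picard substitution — each round roughly doubles their number — so one must identify precisely which multi-integrals contribute at leading order and confirm that all the rest are absorbed into the $\Ord{h^2,h^3}$ local remainder via It\^o isometry; the vector setting adds the further wrinkle of correctly carrying Jacobians~$\nabla\vec b_0$ and Hessians~$\nabla^2\vec b_0$, but the algebra mirrors the scalar case with only notational changes.
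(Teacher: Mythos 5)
Your proposal follows essentially the same route as the paper's proof: an iterated stochastic-integral (Milstein) expansion of the true one-step change, a Taylor expansion of $\rat12(\vec K_1+\vec K_2)$ producing the cross product $\rat12(\DW+S\sqrt h)(\DW-S\sqrt h)\vec b'_0\vec b_0=\rat12(\DW^2-S^2h)\vec b'_0\vec b_0$, the identification of $S^2=1$ and $\E[S]=0$ as the required sign properties, a local error of $\Ord{h^2,h^3}$, and summation over $\Ord{1/h}$ steps. Your added remark about a discrete Gr\"onwall/stability argument is a sensible refinement that the paper in fact glosses over, but it does not change the structure of the argument.
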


\begin{proof}
The proof has two parts:  the first is the well known, standard, expansion of the solution of the general \sde~\eqref{eq:sde1ab} by iterated stochastic integrals leading to the Milstein scheme~\cite[e.g.]{Higham01, Kloeden01}; the second shows how the scheme~\eqref{eq:ieuabj} matches the integrals to an order of error.

First look at the repeated integrals for one time step; without loss of generality, start with a time step from $t_0=0$ to $t_1=t_0+h=h$ as the analysis for all other time steps is identical with minor shifts in the times of evaluation and integration. 
The stochastic `Taylor series' analysis starts from the integral form of It\^o formula~\eqref{eq:ito}: for a stochastic process~$\vec X(t)$ satisfying the general It\^o \sde~\eqref{eq:sde1ab},  for operators~$L_t^k$, any smooth function~$f(t,\vec x)$ of the process satisfies
\begin{align}&
f(t,\vec X_t)=f(0,\vec X_0)+\int_0^t L_s^0f(s,\vec X_s)\,ds+\int_0^tL^1_sf(s,\vec X_s)\,dW_s\,,
\label{eq:itol}
\\\text{where}\quad&
L^0_s=\left[\D t{}+a_i\D {x_i}{}+\frac12b_ib_j\Dx{x_i}{x_j}{}\right]_{t=s},
\quad
L^1_s=\left[b_i\D {x_i}{}\right]_{t=s}.
\nonumber
\end{align}
For conciseness we use subscripts~$0$, $t$, $s$ and~$r$ to denote evaluation at these times, and similarly $f_t=f(t,\vec X_t)$, and use subscripts~$i$ and~$j$ to denote components of a vector, with the Einstein summation convention for repeated indices.  
As you would expect, when stochastic effects are absent, $\vec b=\vec 0$\,, the integral formula~\eqref{eq:itol} reduces, through the first two components of~$L^0_s$, to an integral version of the well known deterministic chain rule: $f(t,\vec X_t)=f(0,\vec X_0)+\int_0^t \big[\partial_tf(s,\vec X_s) +a_i\partial_{x_i}f(s,\vec X_s)\big]\,ds$\,.
Now turn to the \sde~\eqref{eq:sde1ab} itself: it is a differential version of an integral equation which over the first time step gives
\begin{align}
\Delta \vec X={}& \vec X(h,\omega)-\vec X(0,\omega)
=\int_0^h d\vec X
\nonumber\\={}&
\int_0^h\vec a(t,\vec X_t)\,dt+\int_0^h\vec b(t,\vec X_t)\,dW_t
\nonumber\\&[\text{apply the It\^o formula~\eqref{eq:itol} to both }\vec a(t,\vec X_t)\text{ and }\vec b(t,\vec X_t)]
\nonumber\\={}&
\int_0^h \left[ \vec a_0+\int_0^t L_s^0\vec a_s\,ds+\int_0^tL^1_s\vec a_s\,dW_s \right]\,dt
\nonumber\\&{}
+\int_0^h \left[ \vec b_0+\int_0^t L_s^0\vec b_s\,ds+\int_0^tL^1_s\vec b_s\,dW_s\right]\,dW_t
\nonumber\\&[\text{apply the It\^o formula~\eqref{eq:itol} to }L^1_s\vec b_s]
\nonumber\\={}&
\int_0^h \vec a_0\,dt
+\int_0^h\int_0^t L_s^0\vec a_s\,ds\,dt
+\int_0^h\int_0^tL^1_s\vec a_s\,dW_s\,dt
\nonumber\\&{}
+\int_0^h \vec b_0\,dW_t
+\int_0^h\int_0^t L_s^0\vec b_s\,ds\,dW_t
\nonumber\\&{}
+\int_0^h\int_0^t\left[ L^1_0\vec b_0 +\int_0^s L_r^0L_r^1\vec b_r\,dr+\int_0^sL^1_rL_r^1\vec b_r\,dW_r\right]\,dW_s\,dW_t
\nonumber\\&[\text{now rearrange these eight integrals in order of magnitude}]
\nonumber\\={}&
\vec a_0\int_0^h dt
+\vec b_0\int_0^h dW_t
+L^1_0\vec b_0\int_0^h\int_0^tdW_s\,dW_t
\nonumber\\&{}
+\left[
\int_0^h\int_0^tL^1_s\vec a_s\,dW_s\,dt
+\int_0^h\int_0^t L_s^0\vec b_s\,ds\,dW_t
\right.\nonumber\\&\left.\qquad{}
+\int_0^h\int_0^t\int_0^sL^1_rL_r^1\vec b_r\,dW_r\,dW_s\,dW_t
\right]
\nonumber\\&{}
+\left\{
\int_0^h\int_0^t L_s^0\vec a_s\,ds\,dt
+\int_0^h\int_0^t\int_0^s L_r^0L_r^1\vec b_r\,dr\,dW_s\,dW_t
\right\}
\label{eq:dvie}
\end{align}
\begin{itemize}
\item Simplify the first line in this last expression~\eqref{eq:dvie} for~$\Delta\vec X$ using the well known integrals $\int_0^h dt=h$\,, $\int_0^hdW_t=\DW$ and $\int_0^h\int_0^tdW_s\,dW_t=\int_0^hW_t\,dW_t=\rat12(\DW^2-h)$ \cite[(3.6), e.g.]{Higham01}.  The last of these three integrals follow from applying It\^o's formula applied to $F(t,W_t)=\rat12W_t^2$ to deduce $dF=\rat12\,dt+W_t\,dW_t$\,, and integrating a rearrangement gives $\int W_t\,dW_t=\int dF-\int\rat12\,dt=\rat12W_t^2-\rat12t$\,.  Also simplify the first line by defining the matrix $\vec b'_0=\begin{bmatrix} \D{x_j}{b_i} \end{bmatrix}_{t=0}$ so that $L^1_0\vec b_0=\vec b'_0\vec b_0$\,.
\item The three integrals above in square brackets in expression~\eqref{eq:dvie} all have expectation zero and variance~\Ord{h^3}.  Recall that with two arguments \Ord{h^p,h^q}~denotes quantities with mean~\Ord{h^p} and variance~\Ord{h^q}. Thus these three integrals in square brackets are~\Ord{0,h^3}.    
\item The two integrals above in curly braces in expression~\eqref{eq:dvie} are all~\Ord{h^2} in magnitude and hence are~\Ord{h^2,h^4}.  
\end{itemize}
Combining all these leads to the well established Milstein scheme for the change in~$\vec X$ over one time step from~$t_0$ to~$t_1$ as
\begin{equation}
\Delta \vec X=\vec a_0h
+\vec b_0\DW
+\vec b'_0\vec b_0\rat12(\DW^2-h)
+\Ord{h^2,h^3}.
\label{eq:mils}
\end{equation}

Second, we proceed to show the scheme~\eqref{eq:ieuabj} matches this Milstien scheme~\eqref{eq:mils}. 
Note $\vec K_1=h\vec a_0+(\DW-S\sqrt h)\vec b_0=\Ord{h,h}$ so the product $\vec K_1\vec K_1=\Ord{h,h^2}$ and so on.  Hence, by Taylor series in the arguments of the smooth drift~$\vec a$ and volatility~$\vec b$, 
\begin{align*}
\vec K_2={}& h\left[\vec a_0+\vec a'_0\vec K_1+\Ord{h,h^2}\right]
\\&{}+(\DW+S\sqrt h)\left[\vec b_0+h\dot{\vec b}_0+\vec b'_0\vec K_1
+\rat12\vec b''_0\vec K_1\vec K_1 +\Ord{h^2,h^3} \right]
\end{align*}
where $\vec b''\vec K_1\vec K_1$ denotes the tensorial double sum~$\Dx{x_i}{x_j}{\vec b}K_{1i}K_{1j}$, and where the overdot denotes the partial derivative with respect to time,~$\dot{\vec b}=\D t{\vec b}$.
Combining $\vec K_1$~and~$\vec K_2$, the corresponding first step in the scheme~\eqref{eq:ieuabj} predicts the change
\begin{align}
\Delta \vec X={}&\vec a_0h+ \vec b_0\DW +\rat12\vec b'_0\vec b_0(\DW^2-S^2h)
\nonumber\\&{}
+\rat12(\DW-S\sqrt h)\left[ h\vec a'_0\vec b_0+\rat12(\DW^2-S^2h)\vec b''_0\vec b_0\vec b_0\right]
\nonumber\\&{}
+\rat12h(\DW+S\sqrt h)(\dot {\vec b}_0+\vec b'_0\vec a_0)
+\Ord{h^2,h^4}\,.
\label{eq:nmrkab}
\end{align}
Provided $S^2=1+\Ord{h,h}$ the first lines match to~\Ord{h^2,h^3}: normally $S^2=1$ as specified in~\eqref{eq:ieuabj}. Other terms detailed in~\eqref{eq:nmrkab} are~\Ord{0,h^3} provided $\operatorname{E}(S)=\Ord{0,h}$: normally set to be zero as specified in~\eqref{eq:ieuabj}.  Hence one step of the scheme~\eqref{eq:ieuabj} matches the solution to~\Ord{h^2,h^3}. The local error over one step of~\Ord{h^2,h^3} leads to, over \Ord{1/h}~steps, a global error of \Ord{h,h^2}.
\end{proof}

This proof confirms the order of error seen in the earlier examples.  Further, because we can readily transform between It\^o and Stratonovich \sde{}s, we now prove that a minor variation of the numerical scheme applies to Stratonovich \sde{}s.

\begin{corollary}[Stratonovich SDEs] \label{cor:nm1strat}
   The Runge--Kutta like scheme~\eqref{eq:ieuabj}, but setting $S=0$\,, has errors~\Ord{h} when the \sde~\eqref{eq:sde1ab} is to be interpreted in the Stratonovich sense.
\end{corollary}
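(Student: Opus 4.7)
My plan is to leverage Theorem~\ref{thm:nm1ito} via the standard It\^o--Stratonovich conversion. Recall that the Stratonovich SDE $d\vec X=\vec a\,dt+\vec b\circ dW$ is equivalent to the It\^o SDE $d\vec X=\bigl(\vec a+\rat12\vec b'\vec b\bigr)dt+\vec b\,dW$, where $(\vec b'\vec b)_i=b_j\partial_{x_j}b_i$. Correspondingly, the Stratonovich one-step Milstein expansion agrees with~\eqref{eq:mils} except that the iterated integral $\int_0^h\int_0^t dW_s\circ dW_t=\rat12\DW^2$ replaces the It\^o version $\rat12(\DW^2-h)$; equivalently, the extra Stratonovich drift $+\rat12\vec b'_0\vec b_0 h$ cancels the $-\rat12\vec b'_0\vec b_0 h$ that It\^o's formula generated inside~\eqref{eq:mils}.

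The key observation is already visible in~\eqref{eq:nmrkab}: the leading part of the scheme reads $\vec a_0 h+\vec b_0\DW+\rat12\vec b'_0\vec b_0(\DW^2-S^2 h)$. Choosing $S\equiv0$ removes precisely the $-\rat12\vec b'_0\vec b_0 h$ piece, turning the It\^o--Milstein coefficient $\rat12(\DW^2-h)$ into the Stratonovich--Milstein coefficient $\rat12\DW^2$. Thus the one-step increment produced by~\eqref{eq:ieuabj} with $S=0$ should match the Stratonovich one-step truth to the same local accuracy $\Ord{h^2,h^3}$ that was established for the It\^o case inside the proof of Theorem~\ref{thm:nm1ito}.

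To carry this out rigorously I would, first, redo the iterated-integral expansion~\eqref{eq:dvie} under the Stratonovich interpretation, using the Stratonovich chain rule (no It\^o correction) and the identity $\int_0^h W_t\circ dW_t=\rat12\DW^2$; this yields the Stratonovich analogue of~\eqref{eq:mils} with $\rat12\vec b'_0\vec b_0\DW^2$ in place of $\rat12\vec b'_0\vec b_0(\DW^2-h)$ and the same remainder $\Ord{h^2,h^3}$. Second, I would set $S=0$ throughout in the derivation of~\eqref{eq:nmrkab}; any terms that previously involved $S$ or $S^2$ either vanish outright or are absorbed into the $\Ord{h^2,h^4}$ remainder. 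Third, comparing the two expansions gives local agreement to $\Ord{h^2,h^3}$, and summing over $\Ord{1/h}$ steps yields the global error $\Ord{h,h^2}$, i.e.\ \textsc{rms} error~$\Ord{h}$.

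The main obstacle to watch is that some sub-leading pieces in~\eqref{eq:nmrkab}, for instance $\rat12(\DW-S\sqrt h)h\vec a'_0\vec b_0$ and the $\vec b''_0\vec b_0\vec b_0$ term, are still $S$-dependent; setting $S=0$ simply drops the $S\sqrt h$ factors, and one must check that what remains stays inside the tolerated $\Ord{h^2,h^4}$ remainder (it does, because each such factor multiplies at least one power of~$h$). The conditions $\E[S]=0$ and $S^2=1$ used in Theorem~\ref{thm:nm1ito} were needed \emph{only} to reproduce the It\^o--Milstein $-h$ correction; since the Stratonovich SDE carries no such correction, the choice $S=0$ is both sufficient and natural.
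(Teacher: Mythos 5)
Your proposal is correct and follows essentially the same route as the paper: convert the Stratonovich \sde\ to its It\^o equivalent with drift $\vec a+\rat12\vec b'\vec b$, observe that the true one-step change in Stratonovich coefficients is $\vec a_0h+\vec b_0\DW+\rat12\vec b'_0\vec b_0\DW^2+\Ord{h^2,h^3}$, and note that setting $S=0$ in the scheme's expansion~\eqref{eq:nmrkab} replaces $(\DW^2-S^2h)$ by $\DW^2$ so the two match to the same local order, giving global error~\Ord{h}. The only quibble is that the residual $S$-free terms you worry about are~\Ord{0,h^3} rather than~\Ord{h^2,h^4}, but that still sits inside the tolerated local error~\Ord{h^2,h^3}, exactly as in the paper's Theorem~\ref{thm:nm1ito}.
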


\begin{proof}
Interpreting the \sde~\eqref{eq:sde1ab} in the Stratonovich sense implies solutions are the same as the solutions of the It\^o \sde
\begin{equation*}
d\vec X=(\vec a+\rat12\vec b'\vec b)\,dt+\vec b\,dW.
\end{equation*}
Apply the scheme~\eqref{eq:ieuabj} (with $S=\pm1$ as appropriate to an It\^o \sde), or the analysis of the previous proof, to this It\^o \sde.  Then, for example, the one step change~\eqref{eq:nmrkab} becomes
\begin{equation*}
\Delta \vec X=(\vec a_0+\rat12\vec b'_0\vec b_0)h+\vec b_0\DW+\rat12\vec b'_0\vec b_0(\DW^2-h)+\Ord{h^2,h^3}.
\end{equation*}
The component of the deterministic drift term that involves~$\vec b_0\vec b'_0$ cancel leaving, in terms of the coefficient functions of the Stratonovich \sde,
\begin{equation}
\Delta \vec X=a_0h+\vec b_0\DW+\rat12\vec b'_0\vec b_0\DW^2 +\Ord{h^2,h^3}.
\label{eq:nm1strat}
\end{equation}
Now apply the scheme~\eqref{eq:ieuabj} with $S=0$ to the Stratonovich \sde:
Taylor series expansions obtain the one step numerical prediction as~\eqref{eq:nmrkab} upon setting $S=0$\,.  This one step numerical prediction is the same as~\eqref{eq:nm1strat} to the same order of errors.  Thus the  scheme~\eqref{eq:ieuabj} with $S=0$ solves the Stratonovich interpretation of the \sde~\eqref{eq:sde1ab}.
\end{proof}

\begin{exercise}[iterated integrals] 
Consider the scalar \sde\ $dX=X\,dW$.  This \sde\ is shorthand for the It\^o integral $X_t=X_0+\intw s{X_s}$\,.
Over a small time interval~$\Delta t=h$ this integral gives $X_h=X_0+\intw t{X_t}$\,.  Use this as the start of an iteration to provide successively more accurate approximations to~$X_h$: successive approximations are successive truncations of
\begin{equation*}
X_h\approx X_0+X_0\, \DW
+X_0\left[\rat12(\DW )^2-\rat12h\right]
+X_0\left[\rat16(\DW )^3-\rat12h\DW \right].
\end{equation*}
Determine the integral remainders for each of the approximations. 
\end{exercise}

\begin{exercise}[quadratic convergence]
Adapt the proof of Lemma~\ref{thm:rkaxbdw} to prove that in the specific case when the drift~$\vec a=\vec\alpha(t)+\beta(t)\vec X$  and the volatility, independent of~$x$, satisfies $\dot {\vec b}=\beta \vec b$, then the scheme has local error~$\Ord{h^3,h^5}$ and hence global error~$\Ord{h^2,h^4}$, as seen in Figure~\ref{fig:sde1abll1}. 

\end{exercise}

\section{Conclusion}

A good basic numerical scheme for integrating It\^o \sde{}s is the Runge--Kutta like scheme~\eqref{eq:ieuabj} (set $S_k=0$ to integrate Stratonovich \sde{}s).  A teacher could introduce it in the context of the introduction to numerical \sde{}s outlined by Higham~\cite{Higham01}. 

One of the appealing features of the scheme~\eqref{eq:ieuabj} is that it reduces,  for small noise, to a well known scheme for deterministic \ode{}s. Consequently, we expect the global error \Ord{\|\vec a\|h^2+\|\vec b\|h} for some norms of the drift and volatility.  Such more general expressions of the error should be useful in multiscale simulations where the strength of the noise depends upon the macroscale time step, such as in the modelling of a stochastic Hopf bifurcation~\cite[\S5.4.2]{Roberts06k}.

One required extension of the scheme~\eqref{eq:ieuabj} is to generalise it, if possible, to the case of multiple independent noises.  I am not aware of an attractive generalisation to this practically important case.

\bibliographystyle{plain}
\bibliography{ajr,bib}

\begin{thebibliography}{10}

\bibitem{Higham01}
Desmond~J. Higham.
\newblock An algorithmic introduction to numerical simulation of stochastic
  differential equations.
\newblock {\em SIAM Review}, 43(3):525--546, 2001.

\bibitem{Kloeden01}
P.~E. Kloeden.
\newblock A brief overview of numerical methods for stochastic differential
  equations.
\newblock Technical report, Fachberiech Mathematik, Johann Wolfgang Goethe
  Universitat, August 2001.

\bibitem{Kloeden92}
P.~E. Kloeden and E.~Platen.
\newblock {\em Numerical solution of stochastic differential equations},
  volume~23 of {\em Applications of Mathematics}.
\newblock Springer--Verlag, 1992.

\bibitem{Komori2007b}
Yoshio Komori.
\newblock Multi-colored rooted tree analysis of the weak order conditions of a
  stochastic runge--kutta family.
\newblock {\em Applied Numerical Mathematics}, 57(2):147--165, 2007.

\bibitem{Komori2007a}
Yoshio Komori.
\newblock Weak order stochastic runge--kutta methods for commutative stochastic
  differential equations.
\newblock {\em Journal of Computational and Applied Mathematics},
  203(1):57--79, 2007.

\bibitem{Komori2007c}
Yoshio Komori.
\newblock Weak second-order stochastic runge--kutta methods for non-commutative
  stochastic differential equations.
\newblock {\em Journal of Computational and Applied Mathematics},
  206(1):158---173, 2007.

\bibitem{Kreyszig9}
E.~Kreyszig.
\newblock {\em Advanced engineering mathematics}.
\newblock Wiley, 9th edition, 2006.

\bibitem{Roberts06k}
A.~J. Roberts.
\newblock Normal form transforms separate slow and fast modes in stochastic
  dynamical systems.
\newblock {\em Physica~A}, 387:12--38, 2008.
\newblock \doi{10.1016/j.physa.2007.08.023}.

\bibitem{Roberts08g}
A.~J. Roberts.
\newblock {\em Elementary calculus of financial mathematics}, volume~15 of {\em
  Mathematical Modeling and Computation}.
\newblock SIAM, Philadelphia, 2009.
\newblock Appeared twice (two printings) in SIAM's top-25 best sellers for the
  year ending April, 2010.

\bibitem{Rossler2009}
Andreas Rossler.
\newblock Second order runge--kutta methods for ito stochastic differential
  equations.
\newblock {\em SIAM Journal on Numerical Analysis}, 47(3):1713--1738, 2009.

\bibitem{Rossler2010}
Andreas Rossler.
\newblock Runge--kutta methods for the strong approximation of solutions of
  stochastic differential equations.
\newblock {\em SIAM Journal on Numerical Analysis}, 48(3):922--952, 2010.

\end{thebibliography}

\end{document}